\begin{document}

\title[Simplicity of twists of abelian varieties]{Simplicity of twists of abelian varieties}
\author{Alex Bartel}
\thanks{This research is partly supported by a research fellowship from the
Royal Commission for the Exhibition of 1851.}
\date{}
\address{Department of Mathematics, Warwick University,
Coventry CV4 7AL, UK}
\email{a.bartel@warwick.ac.uk}
\llap{.\hskip 10cm} \vskip -0.8cm
\maketitle

\begin{abstract}
We give some easy necessary and sufficient criteria for twists of abelian
varieties by Artin representations to be simple.
\end{abstract}

\section{Introduction}\label{sec:intro}
Let $A/k$ be an abelian variety over a field, let $R\leq \End(A)$ be
a commutative ring of endomorphisms of $A$ (here and in the sequel, we regard
the abelian varieties as schemes \emph{over a base}, and this is also the
category in which our morphisms will live; in particular, $\End(A)$ denotes
endomorphisms of $A$ defined over $k$; the same remark applies to statements
like ``$A$ is principally polarised'', etc.), and let $K/k$ be a finite Galois
extension with Galois group $G$. Let $\Gamma$ be an $R[G]$-module, together
with an isomorphism $\psi: R^n\rightarrow\Gamma$ for some $n$.
Attached to this data is the so-called twist of
$A$ by $\Gamma$, denoted by $B=\Gamma\otimes_R A$, which is an abelian
variety over $k$ with the property that the base change $B_K = B\times_k K$
is isomorphic to $(A_K)^n$.

As soon as $n>1$, $B$ is, by its very definition, never absolutely simple.
But it can be simple over $k$, and to know when this is the case is important
for some applications, see e.g. \cite{Howe}. If $A'$ is a proper abelian
subvariety of $A$, then $\Gamma\otimes_R A'$ is a proper abelian subvariety of
$\Gamma\otimes_R A$. Similarly, if $\Gamma'\leq \Gamma$ is an $R$-free
$R[G]$-submodule of strictly smaller $R$-rank, then $\Gamma'\otimes_R A$ is
isogenous to a
proper abelian subvariety of $\Gamma\otimes_R A$. The purpose of this
note is to point out that, under some mild additional hypotheses (and in
particular over number fields in the generic case, when $\End_{\bar{k}}(A)\cong
\Z$), these are the only two ways in which $B$ can fail to be simple.

As a concrete example, we mention the following generalisation of Howe's analysis
\cite{Howe}:
\begin{theorem}\label{thm:1}
Let $A/k$ be a simple abelian variety of
dimension 1 or 2 over a number field, let $p$ be an odd prime
number and let $K/k$ be a Galois extension with Galois group $G$ of order $p$.
If $A$ is not absolutely simple or not principally polarised, assume that $p>3$.
Let $I$ be the augmentation ideal in $\Z[G]$, i.e. the kernel of the map
$\Z[G]\rightarrow \Z$, $g\mapsto 1$ $\forall g\in G$. Then
$I\otimes_{\Z} A$ is simple if and only if $\End(A)\otimes \Q$ does not
contain the quadratic subfield of $\Q(\mu_p)$.
\end{theorem}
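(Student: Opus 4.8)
The plan is to turn the question into a computation of the rational endomorphism algebra of $B=I\otimes_\Z A$, using the standard fact that $B$ is simple over $k$ if and only if $\End(B)\otimes\Q$ is a division algebra. First I would record the two inputs coming from the general twisting formalism recalled in the introduction: over $K$ one has $B_K\cong (A_K)^{p-1}$, and the $\Q[G]$-module $W:=I\otimes\Q$ is the unique faithful irreducible rational representation of $G$, with commutant $\End_{\Q[G]}(W)\cong\Q(\zeta_p)=:F$. Writing $V:=\End_K(A_K)\otimes\Q$ with its natural $G$-action, Galois descent along the isomorphism $B_K\cong (A_K)^{p-1}$ identifies $\End(B)\otimes\Q$ with $(\End_\Q(W)\otimes_\Q V)^G$, where $G$ acts by conjugation through the representation on $W$ in the first factor and through the Galois action on $V$ in the second.

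The next step is to show that $G$ acts trivially on $V$, i.e. that no new endomorphisms are acquired over $K$, so that $V=E:=\End(A)\otimes\Q$. This is where I would exploit that $[K:k]=p$ is an \emph{odd} prime together with $\dim A\le 2$: any geometric endomorphism not already defined over $k$ has field of definition inside an extension whose degree cannot be $p$ (for an elliptic curve the extra complex multiplication lives in a quadratic extension; for a surface one runs through the Albert types and checks that the relevant automorphism groups have order prime to $p$, invoking absolute simplicity and, for the smallest primes, the principal polarisation hypothesis). Granting this, the descent formula collapses to $\End(B)\otimes\Q\cong \End_{\Q[G]}(W)\otimes_\Q E\cong F\otimes_\Q E$, and the whole theorem reduces to the assertion that $F\otimes_\Q E$ is a division algebra if and only if $E$ does not contain the quadratic subfield $F_2=\Q(\sqrt{p^*})$ of $F=\Q(\zeta_p)$, where $p^*=(-1)^{(p-1)/2}p$.

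It then remains to decide when $F\otimes_\Q E$ is a division algebra, running through the possibilities for $E$ allowed by Albert's classification in dimension $\le 2$: $\Q$, an imaginary or real quadratic field, a quartic CM field, or an indefinite quaternion algebra over $\Q$. When $E$ is a field the argument is clean: since $F/\Q$ is Galois, $F\otimes_\Q E$ is a field exactly when $E\cap F=\Q$, and because $F$ has a unique quadratic subfield while $[E:\Q]\le 4$, the condition $E\cap F\neq\Q$ is equivalent to $F_2\subseteq E$. This settles the elliptic-curve case at once (recovering Howe \cite{Howe}) as well as the real-quadratic, imaginary-quadratic and quartic-CM cases.

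The hard part, which I expect to be the main obstacle, is the quaternion case. Here $F\otimes_\Q E$ is a quaternion algebra over $F$, so by the local--global principle it is a division algebra if and only if $E$ stays ramified at some place of $F$, i.e. if and only if some rational prime $\ell$ ramifying in $E$ has \emph{odd} local degree $\mathrm{ord}(\ell \bmod p)$ in $F$ (the prime $p$ itself is harmless, being totally, hence evenly, ramified). I would then compare this with the embeddability of $F_2$ into $E$, which by the theory of quaternion algebras means that every ramified $\ell$ is inert in $F_2$, i.e. a non-square mod $p$. Reconciling ``some ramified $\ell$ of odd order'' with ``some ramified $\ell$ that is a square mod $p$'' amounts to controlling the $2$-part of $\mathrm{ord}(\ell\bmod p)$ relative to $(p-1)/2=[F:F_2]$; the comparison is immediate when $p\equiv 3\pmod 4$ but delicate when $p\equiv 1\pmod 4$, and it is exactly here that the residual hypotheses (absolute simplicity, principal polarisation, and $p>3$) have to be brought in to rule out the recalcitrant configurations. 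Combining the field cases with this quaternion analysis then yields the stated equivalence.
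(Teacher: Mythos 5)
Your overall strategy is the same as the paper's: under the hypotheses one shows $\End(A)=\End(A_K)$, so that $\End(B)\otimes\Q\cong\Q(\mu_p)\otimes_\Q E$ with $E=\End(A)\otimes\Q$ (the paper's $D$), and everything reduces to deciding when this tensor product is a division algebra. Your treatment of the cases where $E$ is a field is correct and is exactly the paper's Proposition \ref{prop:main}. The gap is in the quaternion case, precisely where you write ``delicate''. Your local--global reduction is right: $\Q(\mu_p)\otimes_\Q E$ is a division algebra iff some prime $\ell\neq p$ ramified in $E$ has odd order mod $p$, while $F_2$ embeds in $E$ iff no ramified $\ell\neq p$ is a square mod $p$. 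For $p\equiv 1\pmod 4$ these genuinely diverge, since a square mod $p$ can have even order (any $\ell$ with $v_2(\mathrm{ord}_p(\ell))$ strictly between $0$ and $v_2(p-1)$). The residual hypotheses cannot rescue this: absolute simplicity, principal polarisation and $p>3$ are spent earlier, on guaranteeing $\End(A)=\End(A_K)$ and pinning down the Albert type of $E$; they impose no constraint on the discriminant of $E$ relative to $p$. Concretely, for $p=5$ and $E$ the indefinite quaternion algebra ramified at $\{19,29\}$ (both $\equiv -1\pmod 5$, hence squares of order $2$), every ramified prime has even order mod $5$, so $\Q(\mu_5)\otimes E$ is split; yet $19$ splits in $\Q(\sqrt5)$, so $\Q(\sqrt5)\not\hookrightarrow E$. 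Your proposal offers no mechanism for excluding such $E$, so the final step fails as written.

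For comparison, the paper disposes of the quaternion case in one line via Proposition \ref{prop:quaternion} (Risman's theorem): $B$ is simple iff $\End_{\Q[G]}(I\otimes\Q)\cong\Q(\mu_p)$ contains no splitting field of $E$. Passing from that statement to the theorem requires identifying ``$\Q(\mu_p)$ contains a splitting field of $E$'' with ``the quadratic subfield $F_2$ splits $E$'', and that identification is exactly the equivalence your computation calls into question: a cyclic field such as $\Q(\mu_5)$ can split a quaternion algebra without its unique quadratic subfield doing so. So your instinct that this is the crux is sound, and your analysis is in fact more explicit than the paper's at this point; but the proposal as it stands does not close the gap, and it cannot be closed by appealing to the hypotheses you name.
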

\begin{remark}
If $p=2$, then $I\otimes_{\Z} A$ is a quadratic twist of $A$, and so also simple
if $A$ is. Since, for all $p$,
$I\otimes \Q$ is the unique non-trivial irreducible $\Q[G]$-module, the
theorem completely deals with simplicity of those twists of elliptic curves
and of principally polarised absolutely simple abelian surfaces
that are trivialised by a cyclic prime degree extension.
\end{remark}
\begin{remark}
By computing the endomorphism ring of $I\otimes \Q$ as a $\Q[G]$-module,
Howe \cite{Howe} showed part of one implication in the case $\dim(A)=1$: he
proved that if $E/k$ is a non-CM elliptic curve, then $I\otimes_\Z E$ is
simple. In the proof of the theorem that we present,
one does not need to know the endomorphism ring of $I\otimes \Q$ to deduce the
result for elliptic curves; one does, however, need to know it to prove the
statement for abelian surfaces.
\end{remark}

The same technique yields uniform statements for higher dimensional abelian varieties,
where the restriction on $p$ depends on the dimension of the variety:
\begin{theorem}\label{thm:2}
Fix an integer $d$. There exists an integer $p_0$ such that for all number fields
$k$, all simple abelian varieties $A/k$ of dimension $d$, all primes $p>p_0$,
and all Galois extensions $K/k$ with cyclic Galois group $G$ of order $p$,
the twist $I\otimes_{\Z}A$ is simple if and only if $\End(A)\otimes\Q$ does not
contain a subfield of $\Q(\mu_p)$ other than $\Q$. Here, $I$ is, as in Theorem
\ref{thm:1}, the augmentation ideal in $\Z[G]$.
\end{theorem}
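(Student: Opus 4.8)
The plan is to reduce the statement to a purely algebraic question about the $\Q$-algebra $D=\End(A)\otimes\Q$, which is a division algebra since $A$ is simple, and then to settle that question by a Brauer-theoretic analysis in which the largeness of $p$ is used to control everything uniformly in terms of $d=\dim A$.

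First I would pin down $\End(B)\otimes\Q$ for $B=I\otimes_\Z A$. By the known bound on the degree of the field of definition of all endomorphisms of an abelian variety of dimension $d$, the action of the absolute Galois group of $k$ on $\End_{\bar k}(A)\otimes\Q$ becomes trivial over an extension of $k$ of degree bounded solely in terms of $d$; hence there is $p_0=p_0(d)$ such that for $p>p_0$ the degree-$p$ extension $K/k$ is linearly disjoint from that field of definition. For such $p$ we get $\End_K(A_K)\otimes\Q=D$ with $G=\mathrm{Gal}(K/k)$ acting trivially on it. Since $I\otimes\Q$ is the unique non-trivial irreducible $\Q[G]$-module and $\End_{\Q[G]}(I\otimes\Q)\cong\Q(\mu_p)$, writing $B_K\cong A_K^{\,p-1}$ and computing $\End(B)\otimes\Q$ as the centraliser of the twisting representation $\rho\colon G\to\mathrm{GL}_{p-1}(\Z)$ inside $M_{p-1}(D)=M_{p-1}(\Q)\otimes_\Q D$ gives
\[
\End(B)\otimes\Q\;\cong\;\End_{\Q[G]}(I\otimes\Q)\otimes_\Q D\;\cong\;\Q(\mu_p)\otimes_\Q D .
\]
Thus $B$ is simple if and only if $\Q(\mu_p)\otimes_\Q D$ is a division algebra, and the whole theorem becomes a statement about this tensor product.

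Next I would analyse $\Q(\mu_p)\otimes_\Q D$. Let $F$ be the centre of $D$; the centre of the tensor product is $\Q(\mu_p)\otimes_\Q F$. If $\Q(\mu_p)\cap F\neq\Q$, then this centre is not a field, the algebra has non-trivial idempotents, and $F\subseteq D$ already contains a non-trivial subfield of $\Q(\mu_p)$: both the failure of simplicity and the arithmetic condition hold, as required. So I may assume $\Q(\mu_p)\cap F=\Q$, in which case the centre is the field $\Q(\mu_p)F$ and $\Q(\mu_p)\otimes_\Q D\cong D\otimes_F\bigl(\Q(\mu_p)F\bigr)$ is a central simple algebra over it, a division algebra precisely when $\Q(\mu_p)F/F$ does not lower the index of $D$. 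Here the key quantitative input is that $\dim_\Q D$, and hence the index of $D$ and $[F:\Q]$, are bounded in terms of $d$ alone (Albert's classification together with the dimension constraints); consequently the only subfields of $\Q(\mu_p)$ that can embed into $D$, or that can contribute to splitting $D$, are those of bounded degree, i.e. the fixed fields of the subgroups of $(\Z/p)^{\times}$ of bounded index.

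The heart of the argument --- and the step I expect to be the main obstacle --- is to show that, for $p>p_0$, the extension $\Q(\mu_p)F/F$ lowers the index of $D$ if and only if some non-trivial subfield of $\Q(\mu_p)$ embeds into $D$. One implication is immediate, since an embedded subfield splits off part of $D$. For the converse I would argue place by place: $D$ is ramified at a finite set $S$ of primes of $F$, and at $v\in S$ with local invariant of denominator $m_v$ the base change to $\Q(\mu_p)F$ kills the $v$-component only if $m_v$ divides the local degree of $\Q(\mu_p)F/F$ at $v$, which is the order of the corresponding Frobenius in the cyclic group $\mathrm{Gal}(\Q(\mu_p)/\Q)\cong(\Z/p)^{\times}$. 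The difficulty is that $S$ is not controlled uniformly in $A$, so the largeness of $p$ must be exploited through the cyclic structure of $(\Z/p)^{\times}$ and the boundedness of the $m_v$ to force the splitting data to be realised already by a subfield of bounded degree, which then genuinely embeds into $D$. Once this equivalence is established, combining it with the reductions above yields exactly the stated criterion, and tracing through the bounds shows that $p_0$ may be taken to depend only on $d$.
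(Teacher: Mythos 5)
Your reduction matches the paper's: Silverberg's bound supplies a $p_0$ depending only on $d$ such that $\End(A)=\End(A_K)$ for $p>p_0$, and then $\End(B)\otimes\Q\cong\End_{\Q[G]}(I\otimes\Q)\otimes_\Q D\cong\Q(\mu_p)\otimes_\Q D$, so the theorem becomes the question of when this tensor product is a division algebra; your handling of the case $\Q(\mu_p)\cap Z\neq\Q$ ($Z$ the centre of $D$) is also exactly the first half of the proof of Proposition \ref{prop:gammafield}. But the step you yourself flag as ``the main obstacle'' --- that for $\Q(\mu_p)\cap Z=\Q$ the index of $D\otimes_Z(Z\Q(\mu_p))$ drops if and only if some non-trivial subfield of $\Q(\mu_p)$ embeds into $D$ --- is precisely the step you never complete. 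A sketch that ends with ``the largeness of $p$ must be exploited \dots to force the splitting data to be realised by a subfield of bounded degree'' is a statement of the difficulty, not a proof, so as written the argument has a genuine gap at its decisive point.

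The paper closes that gap without any local analysis. In Proposition \ref{prop:gammafield} (citing \cite[\S 74A]{CR}) one observes that when $\Q(\mu_p)\cap Z=\Q$ the algebra $\Q(\mu_p)\otimes_\Q D$ is central simple over the field $Z\Q(\mu_p)$ and has the same dimension over $\Q(\mu_p)$ as $D$ has over $\Q$; hence it has zero divisors exactly when its index is smaller than that of $D$, which by the general theory of splitting fields happens exactly when $\Q(\mu_p)$ meets a splitting field (equivalently, a maximal subfield) of $D$ in a field strictly larger than $\Q$. Since a subfield of a maximal subfield of $D$ lies in $D$, and conversely any non-trivial subfield of $\Q(\mu_p)$ contained in $D$ extends to a maximal subfield, this is the stated criterion. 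In particular your worry about the uncontrolled ramification set $S$ of $D$ is a red herring: the largeness of $p$ is used only through Silverberg's theorem to guarantee $\End(A)=\End(A_K)$, and the division-algebra step works for every $p$ once that holds. If you insist on the Brauer--Hasse--Noether route you would have to prove the index-reduction statement for the cyclic extension $Z\Q(\mu_p)/Z$ by hand; the global splitting-field argument makes that unnecessary.
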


Similarly concrete results can be obtained for twists by other representations,
and we give several more examples in the same vein in the last section.

\begin{acknowledgements}
I would like to thank Barinder Banwait for bringing Howe's paper to my
attention, which motivated this work. Many thanks are due to Victor Rotger
for very helpful email correspondence. I gratefully acknowledge the financial
support by the Royal Commission for the Exhibition of 1851.
\end{acknowledgements}

\section{Endomorphisms of twists of abelian varieties}
In this section we begin by recalling (see \cite[\S III.1.3]{Serre}) the
definition of a twist of an abelian variety by an Artin representation, and
then give sufficient conditions for the endomorphism ring of such a twist to
be an integral domain, equivalently for the twist to be simple. We strongly
recommend \cite{MRS} for a very thorough
treatment of twists of abelian varieties, and, more generally, of commutative
algebraic groups.

Let $Y/k$ be an abelian variety, and $K/k$ a finite Galois extension with Galois
group $G$.
A $K/k$-form of $Y$ is a pair $(X,f)$, where $X/k$ is an abelian variety, and
$f:Y_K\rightarrow X_K$ is an isomorphism, defined over $K$. There is an obvious
notion of isomorphism between such pairs, and the set of isomorphism classes
of $K/k$-forms of $Y$ is in bijection with the pointed set $H^1(G,\Aut Y_K)$,
where the $G$-action on $\Aut_K Y$ is given by\footnote{we
adhere to the common convention that the superscript for the action is written on the
right, even though this is actually a left action}
$\phi^\sigma = \sigma\circ\phi\circ\sigma^{-1}$
for $\sigma\in G$ and $\phi\in\Aut(Y_K)$. The bijection is given by assigning to
a $K/k$-form $(X,f)$ the cocycle represented by $\sigma\mapsto f^{-1}f^\sigma$,
where, as before, $f^\sigma$ is defined to be $\sigma\circ f\circ\sigma^{-1}$.

Now, suppose that $A/k$ is an abelian variety, and $R\leq \End(A)$
a commutative ring. With $K/k$ and $G$ as above, let $\Gamma$ be an
$R[G]$-module, together with an $R$-module isomorphism
$\psi: R^n\rightarrow\Gamma$ for some $n\in\N$. Then the map
$a_\Gamma:\sigma\mapsto \psi^{-1}\psi^{\sigma}=\psi^{-1}\circ \sigma \circ\psi\in
\GL_n(R)\leq \Aut_KA^n$ defines a cocycle in $H^1(G,\Aut (A_K)^n)$. Indeed,
note that since $G$ acts trivially on automorphisms of $A^n$ that are defined
over $k$, as is the case for $\GL_n(R)\leq \Aut (A_K)^n$, 1-cocycles whose image
lies in $\GL_n(R)$ are simply group homomorphisms. The twist $B$ of $A$ by
$\Gamma$, written $B=\Gamma\otimes_R A$ is, by definition, the $K/k$-form of $A^n$
corresponding to the cocycle $a_\Gamma$.

We now come to the endomorphism ring of $B$. Our aim is to find criteria for
$B$ to be simple, equivalently for $\End(B)$ to be a division ring.
In theory, one can easily describe
$\End(B)$ in terms of the $G$-module structure of $\End_K(A)$ and $\End_{R}(\Gamma)$:
\begin{lemma}\label{lem:MRS}
There is an isomorphism
$$
\End(\Gamma\otimes_R A) \stackrel{\sim}{\rightarrow} (\End_{R}(\Gamma)\otimes \End_K(A))^G.
$$
\end{lemma}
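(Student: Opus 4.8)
The plan is to obtain $\End(B)$ by Galois descent and then transport the descent datum back to $A^n$ along the form isomorphism, converting it into an explicit twisted $G$-action whose invariants can be recognised as the right-hand side. First I would invoke Galois descent for endomorphisms: since $B$ is defined over $k$, there is a canonical identification $\End(B)=\End_K(B_K)^G$, where $\sigma\in G$ acts by $\phi\mapsto\phi^\sigma=\sigma\circ\phi\circ\sigma^{-1}$. Let $f\colon (A^n)_K\rightarrow B_K$ be the $K$-isomorphism exhibiting $B$ as the form of $A^n$ attached to $a_\Gamma$, so that $f^{-1}f^\sigma=a_\Gamma(\sigma)=\psi^{-1}\psi^\sigma=:c_\sigma\in\GL_n(R)$. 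Conjugation by $f$ is then a ring isomorphism $\End_K(B_K)\stackrel{\sim}{\rightarrow}\End_K((A^n)_K)$, $\phi\mapsto f^{-1}\phi f$.

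Next I would compute how the Galois action is transported. For $g\in\End_K((A^n)_K)$ with $\phi=fgf^{-1}$, a direct calculation using $f^{-1}f^\sigma=c_\sigma$ and $(f^{-1})^\sigma=(f^\sigma)^{-1}$ gives $f^{-1}\phi^\sigma f=c_\sigma\,g^\sigma c_\sigma^{-1}$. Hence $\End(B)$ is identified with the \emph{twisted invariants} $\{g\in\End_K((A^n)_K): c_\sigma g^\sigma c_\sigma^{-1}=g\ \text{for all }\sigma\in G\}$. Since $c_\sigma$ has entries in $R\le\End(A)$, it is fixed by $G$, so in this formula $g^\sigma$ only involves the genuine Galois action on the coefficient ring $\End_K(A)$.

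Finally I would match this with the diagonal action. The isomorphism $\psi$ identifies $\End_K((A^n)_K)\cong M_n(\End_K(A))\cong\End_R(R^n)\otimes_R\End_K(A)$ and, via $\alpha\mapsto\psi^{-1}\alpha\psi$, identifies $\End_R(\Gamma)$ with $\End_R(R^n)=M_n(R)$. Under this identification the intrinsic $G$-action on $\End_R(\Gamma)$ (coming from the $R[G]$-module structure on $\Gamma$, i.e. $\alpha\mapsto\sigma\alpha\sigma^{-1}$) becomes $a\mapsto c_\sigma a c_\sigma^{-1}$, precisely because $\psi^{-1}\sigma\psi=c_\sigma$; tensoring with the Galois action on $\End_K(A)$ then reproduces exactly the twisted action $g\mapsto c_\sigma g^\sigma c_\sigma^{-1}$ of the previous step. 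Thus the twisted invariants coincide with $(\End_R(\Gamma)\otimes\End_K(A))^G$ for the diagonal action, which gives the claimed isomorphism.

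The main obstacle I expect is the bookkeeping in these last two steps rather than any deep input. Concretely, one must (i) pin down the conventions (placement of inverses, and the fact that $\phi^\sigma=\sigma\phi\sigma^{-1}$ is a left action written on the right) so that the twisted action comes out as $c_\sigma(\,\cdot\,)^\sigma c_\sigma^{-1}$ and not as its inverse, and (ii) handle the identification $\End_R(R^n)\otimes_R\End_K(A)\cong M_n(\End_K(A))$ carefully: since $R$ need not be central in $\End_K(A)$, both the identification and the action-matching should be carried out using the left $M_n(R)=\End_R(R^n)$-module structure on the matrix ring, keeping $\End_K(A)$ in the second tensor slot, rather than naively commuting scalars from $R$ past endomorphisms of $A$. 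Once the conventions are fixed, the verification that conjugation by the cocycle $c_\sigma$ reassembles into the given $R[G]$-module structure on $\Gamma$ is the crux, and is a short computation.
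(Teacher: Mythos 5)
Your argument is correct, but it takes a genuinely different route from the paper: the paper's entire proof is a one-line citation of \cite[Proposition 1.6]{MRS} (observing that the absolute Galois group acts on $\Gamma$ through the quotient $G$), whereas you give a self-contained Galois-descent argument that in effect reproves the special case of that proposition needed here. Your computation $f^{-1}\phi^\sigma f = c_\sigma\, g^\sigma c_\sigma^{-1}$ and the subsequent matching of the twisted invariants with the diagonal $G$-action on $\End_R(\Gamma)\otimes\End_K(A)$ are exactly right, and you correctly isolate the only delicate point, namely that $R$ need not be central in $\End_K(A)$, so the identification $\End_R(\Gamma)\otimes_R\End_K(A)\cong M_n(\End_K(A))$ must be made at the level of modules with the multiplicative structure checked separately. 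What the citation buys the paper is brevity and the full generality of \cite{MRS} (commutative algebraic groups, modules that need not be free, infinite extensions); what your direct proof buys is an explicit formula for the isomorphism, and in particular your intermediate description of $\End(B)$ as the twisted commutant $\{g\in M_n(\End_K(A)) : c_\sigma g^\sigma c_\sigma^{-1}=g\}$ specialises, under Assumption \ref{ass} (where $g^\sigma=g$), to precisely the commutant description the paper later imports from \cite[Proposition 2.1]{Howe}; so your argument unifies the two descriptions of $\End(B)$ that the paper treats separately.
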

\begin{proof}
This immediately follows from \cite[Proposition 1.6]{MRS}, by noting that
the absolute Galois group of $k$ acts on $\Gamma$
through the quotient $G$.
\end{proof}

However, in the most general form, this description is not easy to use for
determining when the right hand side of the equation is a division ring.
On the other hand, generically the situation is much better.

\begin{assumption}\label{ass}
For the rest of this section, assume that $\End(A) = \End(A_K)$. Since we are
interested in criteria for $B$ to be simple, we will
also assume from now on that $A$ itself is simple, therefore so is $A_K$
by the previous assumption.
\end{assumption}
\begin{remark}
This assumption is generically satisfied over number fields in the following
sense: fix an abelian variety $A$ over a number field $k$,
and a Galois group $G$. A result of Ribet and Silverberg \cite{Silverberg,Ribet}
says that, given any subring
$\cO\subseteq \End_{\bar{k}}(A)$ there exists a unique minimal extension
$L_{\cO}/k$ such that $\cO\subseteq \End_{L_{\cO}}(A)$. So $\End_K(A)=\End(A)$
whenever $K\cap L_S=k$.
\end{remark}

\begin{notation}
The following notation will be retained throughout the paper:
\begin{itemize}
\item $K/k$ --- a Galois extension of fields with Galois group $G$;
\item $A/k$ --- a simple abelian variety;
\item $S=\End(A)$;
\item $R\leq S$ --- a commutative subring;
\item $\Gamma$ --- an $R$-free $R[G]$-module;
\item $B=\Gamma\otimes_R A$ --- the twist of $A$ by $\Gamma$, which is an abelian variety over $k$;
\item $D=S\otimes_\Z \Q$ --- a division algebra;
\item $F=R\otimes_\Z \Q$ --- a field contained in $D$;

\end{itemize}
\end{notation}

Under Assumption \ref{ass}, Lemma \ref{lem:MRS} becomes
\begin{eqnarray}\label{eqn}
\End(B) \cong \End_{R[G]}(\Gamma)\otimes_R S.
\end{eqnarray}

In general, it is a subtle question with a rich literature when the tensor
product of two division rings over a common subring is a division ring.
But for a generic polarised abelian variety, $S=\Z$. More generally, if $S$ is
commutative, Schur's Lemma furnishes an elementary answer to the question of
simplicity of $B$:

\begin{proposition}\label{prop:main}
Assume, in addition to Assumption \ref{ass},
that $S$ is commutative, i.e. that $D$ is a field.
Then $B$ is simple if and only if $\Gamma\otimes_R D$
is a simple $D[G]$-module.
\end{proposition}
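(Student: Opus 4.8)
The plan is to reduce the simplicity of $B$ to a purely module-theoretic property of $\Gamma\otimes_R D$, by way of the endomorphism algebra. First I would recall the standard fact that an abelian variety $B/k$ is simple if and only if its endomorphism algebra $\End^0(B):=\End(B)\otimes_\Z\Q$ is a division ring (Poincar\'e complete reducibility); so everything comes down to deciding when $\End^0(B)$ is a division ring. Tensoring the isomorphism~(\ref{eqn}) with $\Q$ over $\Z$ and using $S\otimes_\Z\Q=D$ (the tensor products over $R$ make sense because $R$ sits centrally, via $R\subseteq S$) gives
\[
\End^0(B)\;\cong\;\End_{R[G]}(\Gamma)\otimes_R S\otimes_\Z\Q\;\cong\;\End_{R[G]}(\Gamma)\otimes_R D.
\]

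The second step is to identify the right-hand side with the endomorphism ring of the base-changed module $\Gamma\otimes_R D$. I would route the extension $R\to D$ through the field $F=R\otimes_\Z\Q$, so that scalars are extended in two stages. Using the transitivity $M\otimes_R D\cong(M\otimes_\Z\Q)\otimes_F D$, valid for any $R$-module $M$, together with flat base change for endomorphism algebras, one obtains
\[
\End_{R[G]}(\Gamma)\otimes_R D\;\cong\;\End_{F[G]}(\Gamma\otimes_R F)\otimes_F D\;\cong\;\End_{D[G]}(\Gamma\otimes_R D).
\]
The first isomorphism is base change along the flat map $\Z\to\Q$: it is legitimate because $\Gamma$ is a finitely generated $\Z$-module (being $R$-free of finite rank, with $R\subseteq\End(A)$ itself finitely generated over $\Z$), hence finitely presented over the Noetherian ring $R[G]$. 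The second isomorphism extends scalars along the field extension $F\subseteq D$, which is the elementary finite-dimensional case, since $W=\Gamma\otimes_R F$ is finite-dimensional over $F$ and $W\otimes_F D\cong\Gamma\otimes_R D$. Combining the two displays yields $\End^0(B)\cong\End_{D[G]}(\Gamma\otimes_R D)$.

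Finally I would invoke semisimplicity together with Schur's Lemma to read off the answer. Since $D$ has characteristic $0$ and $G$ is finite, Maschke's theorem makes $D[G]$ semisimple, so the finite-dimensional $D[G]$-module $V=\Gamma\otimes_R D$ splits as $V\cong\bigoplus_i M_i^{a_i}$ with the $M_i$ pairwise non-isomorphic simple modules, and accordingly $\End_{D[G]}(V)\cong\prod_i\mathrm{Mat}_{a_i}(\Delta_i)$, each $\Delta_i=\End_{D[G]}(M_i)$ being a division ring by Schur's Lemma. Such a finite product of matrix algebras over division rings is itself a division ring precisely when there is a single factor with $a_i=1$, i.e.\ exactly when $V$ is simple; this delivers both implications simultaneously. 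Chaining the equivalences, $B$ is simple $\Longleftrightarrow$ $\End^0(B)$ is a division ring $\Longleftrightarrow$ $\Gamma\otimes_R D$ is a simple $D[G]$-module, as required. The one place demanding genuine care is the base-change identification of the middle step, which I expect to be the main obstacle; factoring it through the field $F$ is precisely what reduces it to the routine finite-dimensional computation.
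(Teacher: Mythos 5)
Your proposal is correct and follows essentially the same route as the paper: reduce to whether $\End(B)\otimes_\Z\Q\cong\End_{R[G]}(\Gamma)\otimes_R D$ is a division algebra, identify this with $\End_{D[G]}(\Gamma\otimes_R D)$, and conclude by Schur's Lemma. The only differences are cosmetic --- you justify the base-change identification via flatness in two stages through $F$ where the paper writes down the explicit isomorphism, and you make explicit the Maschke/semisimplicity input needed for the converse direction, which the paper leaves implicit.
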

\begin{proof}
The twist $B$ is simple if and only if
$\End(B)$ is a division ring, if and only if
$$
\End(B)\otimes_{\Z}\Q\cong \End_{R[G]}(\Gamma)\otimes_R D
$$
is a division algebra. 
It is an elementary computation that when $S$ is commutative,
$\End_{R[G]}(\Gamma)\otimes_R D$ is precisely the endomorphism ring of the
$D[G]$-module $\Gamma\otimes_RD$, the isomorphism given by

\begin{eqnarray*}
\End_{R[G]}(\Gamma)\otimes_R D & \rightarrow & \End_{D[G]}(\Gamma\otimes_R D),\\
\alpha\otimes f & \mapsto & (\gamma\otimes g \mapsto \alpha(\gamma)\otimes fg).
\end{eqnarray*}
We deduce that, by Schur's Lemma, $B$ is simple if and only
if $\Gamma\otimes_R D$ is a simple $D[G]$-module.
\end{proof}

There is slightly different way of phrasing this discussion, which is
closer to Howe's original proof.
Since $A_K$ is assumed to be simple, $S$ is a division ring, and
$\End_K(A^n)\cong M_n(S)$, the $n$-by-$n$ matrix ring over $S$.
Since the base change of $B$ to $K$ is isomorphic to
$(A_K)^n$, any endomorphism of $B$ gives rise to an endomorphism of
$(A_K)^n$, i.e. an
element of $M_n(S)$. Conversely, it is easy to
characterise the elements of $M_n(S)$ that descend to endomorphisms of $B$:

\begin{proposition}[\cite{Howe}, Proposition 2.1]
An element of $M_n(S)$ descends to an endomorphism of $B$ if and only if
it commutes with all elements of the image of $G$ under the cocycle
$a_\Gamma:G\rightarrow \GL_n(R)\leq \GL_n(S)$.
\end{proposition}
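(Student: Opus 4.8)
The plan is to realise $\End(B)$ by Galois descent as the $G$-fixed part of $\End(B_K)$, transport the latter to $M_n(S)$ by means of the trivialising isomorphism $f\colon (A^n)_K\to B_K$, and then convert the descent condition (Galois-invariance) into the asserted commutation relation by a direct cocycle computation. Since the statement is essentially a repackaging of descent, the argument is short; the work lies in keeping the conventions straight.

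First I would invoke the standard descent principle for morphisms of abelian varieties over a finite Galois extension: an endomorphism $\gamma$ of $B_K$ is the base change of an endomorphism of $B$ defined over $k$ if and only if it is fixed by $G$, where $\sigma\in G$ acts by $\gamma\mapsto\gamma^\sigma=\sigma\circ\gamma\circ\sigma^{-1}$. Thus $\End(B)=\End(B_K)^G$, a fact that may be cited rather than reproved. The trivialising isomorphism $f$, which by definition satisfies $f^{-1}f^\sigma=a_\Gamma(\sigma)$ for all $\sigma$, induces a ring isomorphism $\End(B_K)\stackrel{\sim}{\rightarrow}\End_K(A^n)=M_n(S)$, $\gamma\mapsto\alpha:=f^{-1}\circ\gamma\circ f$; here I use Assumption \ref{ass} to identify $\End_K(A^n)$ with $M_n(S)$. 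Under this correspondence, the element of $M_n(S)$ attached to an endomorphism of $B$ is exactly this $\alpha$, so it remains to characterise which $\alpha$ correspond to $G$-fixed $\gamma$.

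The core step is to compute $\gamma^\sigma$ in terms of $\alpha$. Writing $\gamma=f\circ\alpha\circ f^{-1}$ and using that $\sigma\mapsto(-)^\sigma$ is multiplicative, I obtain $\gamma^\sigma=f^\sigma\circ\alpha^\sigma\circ(f^\sigma)^{-1}$. The crucial observation, already flagged in the excerpt, is that $\alpha\in M_n(S)$ is defined over $k$, so $\alpha^\sigma=\alpha$. Substituting $f^\sigma=f\circ a_\Gamma(\sigma)$ and $(f^\sigma)^{-1}=a_\Gamma(\sigma)^{-1}\circ f^{-1}$ then gives
\begin{equation*}
\gamma^\sigma=f\circ a_\Gamma(\sigma)\,\alpha\,a_\Gamma(\sigma)^{-1}\circ f^{-1}.
\end{equation*}
Hence, for a fixed $\sigma$, $\gamma^\sigma=\gamma$ if and only if $a_\Gamma(\sigma)\,\alpha\,a_\Gamma(\sigma)^{-1}=\alpha$, i.e. $\alpha$ commutes with $a_\Gamma(\sigma)$. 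Letting $\sigma$ range over $G$, the endomorphism $\gamma$ is $G$-fixed precisely when $\alpha$ commutes with the whole image $a_\Gamma(G)\subseteq\GL_n(R)\leq\GL_n(S)$, which is the claim.

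I do not anticipate a genuine obstacle. The only points demanding care are bookkeeping: maintaining the conjugation convention $\phi^\sigma=\sigma\circ\phi\circ\sigma^{-1}$ consistently, correctly using $\alpha^\sigma=\alpha$ because $\alpha$ is $k$-rational (this is exactly where Assumption \ref{ass}, guaranteeing $\End_K(A^n)=M_n(S)$, enters), and inserting the defining relation $f^{-1}f^\sigma=a_\Gamma(\sigma)$ in the right place. The single conceptual input, descent for morphisms of abelian varieties, is standard.
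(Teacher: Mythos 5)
Your argument is correct: identifying $\End(B)$ with the $G$-fixed part of $\End(B_K)$, transporting via the trivialising isomorphism $f$ to $M_n(S)$, and using $\alpha^\sigma=\alpha$ for $k$-rational $\alpha$ together with $f^\sigma=f\circ a_\Gamma(\sigma)$ yields exactly the stated commutation criterion. The paper itself gives no proof of this proposition --- it is quoted from Howe's Proposition 2.1 --- so your write-up simply supplies the standard Galois-descent computation that the citation delegates, and it is consistent with the paper's conventions (in particular with the observation that $G$ acts trivially on $\GL_n(R)$, which is what makes $\alpha^\sigma=\alpha$ and the cocycle a homomorphism).
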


Now, we merely need to observe that, as we remarked above, the cocycle
$a_\Gamma$ is in fact nothing but the group homomorphism $G\rightarrow \Aut(\Gamma)$
with respect to an $R$-basis on $\Gamma$. The commutant of its image
in $M_n(S)$ is the intersection of $M_n(S)$ with the commutant
of the image of $a_\Gamma$
in $M_n(D)$, where $D=S\otimes \Q$ is, as in Proposition \ref{prop:main},
assumed to be a field. Moreover, since for any $x\in M_n(D)$, some
integer multiple of $x$ lies in $M_n(S)$, the commutant of $a_\Gamma(G)$ in
$M_n(S)$ is a division ring if and only if its commutant in $M_n(D)$ is
a division algebra.
By Schur's Lemma, the latter is the case if and only if $\Gamma\otimes_R D$
is simple.

Another example in which equation (\ref{eqn}) can be completely analysed is
when $D=S\otimes \Q$ is a quaternion algebra over $F=R\otimes \Q$.
In that case, a theorem of Risman \cite{Risman}
asserts that if $D'$ is any division algebra over $F$, then $D\otimes_F D'$
has zero-divisors if and only if $D'$ contains a splitting field for $D$.
So we immediately deduce:
\begin{proposition}\label{prop:quaternion}
Assume, in addition to Assumption \ref{ass},
that $D$ is a quaternion algebra over $F=R\otimes \Q$. Then
$B$ is simple if and only if $\End_{F[G]}(\Gamma\otimes F)$
contains no splitting field of $D$.
\end{proposition}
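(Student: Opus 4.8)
The plan is to mimic the reduction used for Proposition \ref{prop:main} and then invoke Risman's theorem. Tensoring the isomorphism (\ref{eqn}) with $\Q$ and using the base-change identity $\End_{R[G]}(\Gamma)\otimes_R F\cong\End_{F[G]}(\Gamma\otimes F)$ (valid because $R$ is commutative and $\Gamma$ is $R$-free, exactly as in the proof of Proposition \ref{prop:main}), I obtain $\End(B)\otimes_\Z\Q\cong E\otimes_F D$, where I write $E:=\End_{F[G]}(\Gamma\otimes F)$. Hence $B$ is simple if and only if $E\otimes_F D$ is a division algebra, and the entire question reduces to determining for which $E$ this holds.

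Since $F$ has characteristic $0$, the module $\Gamma\otimes F$ is semisimple and $E$ is a semisimple $F$-algebra. I would first record the two easy implications that need no input from Risman. On the one hand, $E\hookrightarrow E\otimes_F D$ via $e\mapsto e\otimes 1$, so if $E\otimes_F D$ is a division algebra then the finite-dimensional $F$-algebra $E$ has no zero-divisors and is itself a division algebra. On the other hand, if $E$ contains a splitting field $L$ of $D$, then $D\otimes_F L\cong M_2(L)$ embeds in $E\otimes_F D$ and already has zero-divisors, so $E\otimes_F D$ is not a division algebra. For the remaining direction I would apply the theorem of Risman \cite{Risman} with $D'=E$: when $E$ is a division algebra it asserts that $E\otimes_F D$ has zero-divisors precisely when $E$ contains a splitting field of $D$. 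Together these give the clean biconditional that $E\otimes_F D$ is a division algebra if and only if $E$ is a division algebra containing no splitting field of $D$.

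It remains to check that this criterion collapses to the single condition in the statement, namely that the absence of a splitting field already forces $E$ to be a division algebra; this is the only point I expect to require real care. Writing $E\cong\prod_i M_{m_i}(\Delta_i)$ with each $\Delta_i=\End_{F[G]}(V_i)$ a division algebra, the crucial case is a single isotypic block $M_m(\Delta)$ with $m\geq 2$: then $E$ contains $M_2(F)$, which contains a copy of every quadratic extension of $F$ and in particular a maximal subfield of the quaternion algebra $D$, and such a subfield is a splitting field of $D$. So a non-division isotypic $E$ always contains a splitting field, and ``no splitting field'' indeed forces $E$ to be a division algebra. The remaining possibility, that $\Gamma\otimes F$ fails to be isotypic, makes $B$ isogenous to a non-trivial product of twists and hence visibly non-simple, so I would arrange this reduction to the isotypic case at the very start. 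Getting that reduction precisely right, together with the correct statement and application of Risman's theorem, is the main thing to nail down; everything else is formal.
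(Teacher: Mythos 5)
Your argument follows the same route as the paper's: the paper's entire proof consists of tensoring (\ref{eqn}) with $\Q$ and quoting Risman's theorem \cite{Risman} with $D'=\End_{F[G]}(\Gamma\otimes F)$, which is exactly your middle paragraph. Everything you add about the case where $E=\End_{F[G]}(\Gamma\otimes F)$ fails to be a division algebra is care that the paper does not take, and it is genuinely needed, since Risman's theorem applies only when $D'$ is a division algebra. Your handling of the isotypic non-division case is correct: $M_m(\Delta)$ with $m\geq 2$ contains $M_2(F)$, hence a copy of every quadratic extension of $F$, hence a maximal subfield of $D$, and maximal subfields of a quaternion division algebra are splitting fields.

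The one point to be aware of is that the case you defer --- $\Gamma\otimes F$ not isotypic --- is not rescued by ``reducing to the isotypic case at the start'': it is precisely the case in which the stated biconditional can fail. Take $G$ of order $2$ and $\Gamma=R[G]$, so that $E\cong F\times F$. Then $B$ is the Weil restriction $W_{K/k}(A_K)$, which is not simple; but any subfield of $F\times F$ injects into a factor under projection and contains the diagonal copy of $F$, so it equals that diagonal $F$, which does not split the division algebra $D$. Thus $E$ contains no splitting field of $D$ and yet $B$ is not simple. The proposition therefore carries the implicit hypothesis that $\Gamma\otimes F$ is isotypic (equivalently, that $E$ is a simple algebra, and then, by your $M_m(\Delta)$ argument, a division algebra whenever it contains no splitting field); in the paper's only application of this proposition, $\Gamma\otimes\Q$ is a simple module and $E$ is the field $\Q(\mu_p)$, so the issue never surfaces. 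In short, your proof is complete and correct in every case where the statement is correct, and the residual difficulty you flagged is a defect of the statement rather than of your argument.
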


A generalisation in a slightly different direction is the special case
that $L=\End_{R[G]}(\Gamma)\otimes \Q$ is a field:
\begin{proposition}\label{prop:gammafield}
Assume, in addition to Assumption \ref{ass},
that $L$ is a field. Suppose also that
$R$ is contained in the centre of $\End(A)$. Then
$B$ is simple if and only if $L$ intersects every splitting
field of $D$ in $F=R\otimes \Q$.
\end{proposition}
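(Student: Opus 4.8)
The plan is to run exactly the strategy used for Propositions \ref{prop:main} and \ref{prop:quaternion}: first reduce simplicity of $B$ to a purely algebraic question about a tensor product, and then answer that question with the structure theory of central simple algebras. Tensoring the isomorphism (\ref{eqn}) with $\Q$ gives $\End(B)\otimes\Q\cong\End_{R[G]}(\Gamma)\otimes_R D=L\otimes_F D$, where I use that $\End_{R[G]}(\Gamma)\otimes_R F=L$. Since by hypothesis $R$ lies in the centre of $\End(A)$, the field $F=R\otimes\Q$ lies in the centre $Z:=Z(D)$ of $D$, and since $L$ is assumed to be a field, $L\otimes_F D$ is the tensor product of a field with a central simple $Z$-algebra over the common central subfield $F\subseteq Z$. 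Thus $B$ is simple if and only if $L\otimes_F D$ is a division algebra, and everything reduces to deciding when this is so.

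Next I would separate the contribution of the centre from that of the division part. The centre of $L\otimes_F D$ is $L\otimes_F Z$, a commutative (étale, after recording separability) $F$-algebra, which therefore decomposes as a finite product of fields $L\otimes_F Z\cong\prod_{i=1}^{t}E_i$, each $E_i$ a compositum of $L$ and $Z$; correspondingly $L\otimes_F D\cong\prod_{i=1}^{t}(E_i\otimes_Z D)$, a product of central simple $E_i$-algebras. A product of rings is a division algebra precisely when there is a single factor and that factor is itself a division algebra, so $L\otimes_F D$ is a division algebra if and only if $t=1$, i.e. $L\otimes_F Z$ is a field $E$, and moreover $E\otimes_Z D$ is a division algebra. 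The first requirement is that $L$ and $Z$ be linearly disjoint over $F$; since every splitting field of $D$ contains $Z$, this is already subsumed by the criterion that $L$ meet every splitting field in $F$ (in particular $L\cap Z=F$), and in the special case $R\otimes\Q=Z$ it is automatic and one is left only with the central division algebra $D/Z$.

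It then remains to decide, for the field $E\supseteq Z$, when $E\otimes_Z D$ is a division algebra; this is the analogue for a field of Risman's theorem invoked in Proposition \ref{prop:quaternion}, asserting that $E\otimes_Z D$ is a division algebra exactly when $E$ does not lower the index of $D$, equivalently when $E$ meets every minimal splitting field of $D$ in $Z$. Granting this and unwinding the compositum $E=LZ$ back to $L$ yields the stated criterion. One direction is elementary and I would record it first: if some subfield $F\subsetneq M\subseteq L$ embeds into a splitting field of $D$, then $M\otimes_F D$ already fails to be a division algebra (either its centre acquires zero-divisors, or $M$ partially splits $D$ and its index drops), and this defect is inherited by $L\otimes_F D$, inside which $M\otimes_F D$ sits as an $F$-subalgebra. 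The main obstacle is the converse: showing that whenever $L\otimes_F D$ fails to be a division algebra the index of $D$ must genuinely drop along $L$, and extracting from this an honest common subfield with a splitting field. This is the one genuinely non-formal step, where Schur's lemma together with the double-centraliser theorem and the behaviour of the index under field extension must be brought to bear; the remaining bookkeeping with centres and composita is routine.
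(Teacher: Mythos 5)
Your proposal is correct and follows essentially the same route as the paper: reduce via (\ref{eqn}) to asking when $L\otimes_F D$ is a division algebra, separate off the centre $L\otimes_F Z$, and then invoke the index-reduction theory of central simple algebras over the common central subfield $F$. The one fact you defer without proof --- that $E\otimes_Z D$ is a division algebra iff $E$ meets every minimal splitting field of $D$ in $Z$ --- is precisely the step the paper also delegates to \cite[\S 74A]{CR} after the same dimension count, so you are no less complete than the published argument; if anything, your decomposition of $L\otimes_F Z$ into a product of composita is slightly more careful than the paper's direct identification of ``$L\cap Z=F$'' with ``$L\otimes_F Z$ is a field'', which tacitly uses linear disjointness.
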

\begin{proof}
This follows from the general theory of division algebras,
see e.g. \cite[\S 74A]{CR}. Indeed, let $Z$
be the centre of $D$. If $L\cap Z\neq F$, then certainly $L\otimes_F D$ is
not a division algebra, since $L\otimes_F Z$ is not a field. Suppose that
$L\cap Z=F$, so that $L\otimes_F Z$ is a field.
Then $L\otimes_F D$ is a simple algebra with centre $L\otimes_F Z$.
The dimension of $D$ over $F$ is equal to the dimension of $L\otimes_F D$
over $L$, and their respective dimensions over their centres are therefore also
equal. So $L$ intersects a splitting field of $D$ in a field that is bigger than
$F$ if and only if the index of $L\otimes_F D$ is smaller than that of $D$
if and only if $L\otimes_F D$ has zero divisors.
\end{proof}

\section{Consequences}

We first explain how to deduce Theorem \ref{thm:1} from Propositions
\ref{prop:main} and \ref{prop:quaternion}.

Let $G$ be cyclic of odd prime order $p$.
Recall that $I\leq \Z[G]$ is defined to be the augmentation ideal in $\Z[G]$,
$I=\ker(\sum_{g\in G}n_gg\mapsto \sum_{g\in G}n_g)$. The complexification
$I\otimes \C$ is isomorphic to the direct sum of all non-trivial simple
$\C[G]$-modules, which are all Galois conjugate. It is therefore easy to see
that $I\otimes_\Z \Q$ is a simple $\Q[G]$-module, and that moreover, given any
number field $D$, $I\otimes_\Z D$ is reducible if and only if $D$ intersects
$\Q(\mu_p)$ non-trivially.

First, let $A/k$ be an elliptic curve over a number field.
Then $\End(A)\otimes \Q$ is a field, and the fact that $\End(A) = \End(A_K)$
for an odd degree extension $K/k$ follows from classical CM theory,
see e.g. \cite[Chapter 3]{Lang}. Thus, the dimension 1 case of Theorem
\ref{thm:1} follows from Proposition \ref{prop:main}.

The dimension 2 case is more subtle. Let $A/k$ be an absolutely simple
abelian surface over a number field. Then $\End(A_{\bar{k}})\otimes \Q$ is one
of the following:
\begin{enumerate}
\item $\Q$,
\item a real quadratic number field,
\item a CM field of degree 4,
\item an indefinite quaternion algebra over $\Q$.
\end{enumerate}
We first claim that in all four cases, $\End(A) = \End(A_K)$ for an odd degree
extension $K/k$. This is clear in case 1, and in case 3 this follows from classical
CM theory, see e.g. \cite[Chapter 3]{Lang}. For case 2, observe that the absolute
Galois group of $k$ acts on $\End(A_{\bar{k}})\otimes \Q$ by $\Q$-algebra
automorphisms. If the endomorphism algebra is a quadratic field, then
the action factors through a quotient of $\Gal(\bar{k}/k)$ of index at most 2,
which proves the claim. Finally, case 4 is handled by \cite[Theorem 1.3]{DR}.

If $A/\bar{k}$ is isogenous to a product of elliptic curves, then there are
more possibilities for the structure of $\End(A)$, which have been classified
in \cite[Theorem 4.3]{KedlayaEtAl}. It follows from this classification that
if $\End(A)\otimes \Q$ is a division algebra, then it is still either isomorphic
to $\Q$ or a quadratic field
or a quaternion algebra, and that moreover $\End(A) = \End(A_K)$ for any extension
$K/k$ of degree coprime to 6. So the dimension 2 case of Theorem \ref{thm:1}
follows from Proposition \ref{prop:main} when $\End(A)\otimes \Q$ is a field,
and from Proposition \ref{prop:quaternion} when it is a quaternion algebra,
which covers all possible cases.

To deduce Theorem \ref{thm:2} from Proposition \ref{prop:gammafield},
we use a result of Silverberg, which we will rephrase slightly for our purposes:
for any fixed $d$, there exists a bound $b$ depending only on $d$ (specifically,
$b=4(9d)^{4d}$ is enough),
such that for all abelian varieties over number fields $A/k$ of dimension $d$,
and all extensions $K/k$ of prime degree greater than
$b$, $\End(A) = \End(A_K)$.
Theorem \ref{thm:2} is an immediate consequence of this result together with
Proposition \ref{prop:gammafield}, because $\End_{\Q[G]}(\Gamma\otimes\Q)
\cong \Q(\mu_p)$.

Proposition \ref{prop:main} has an application to questions of simplicity
of Weil restrictions of scalars. If $A/k$ is a simple abelian variety, and $K/k$
is a finite Galois extension with Galois group $G$, then the Weil restriction
of scalars $W_{K/k}(A_K)$ is never simple, since there is a surjective trace map
$W_{K/k}(A_K)\rightarrow A$. Its kernel is, up to isogeny, precisely the
twist $I\otimes_\Z A$, where $I$ is the augmentation ideal in $\Z[G]$. The
following is therefore an immediate consequence of Proposition \ref{prop:main}:

\begin{corollary}
Let $A/k$ be an abelian variety with $\End(A_{\bar{k}})=\Z$. Let $K/k$ be a finite
Galois extension with Galois group $G$. The kernel of the trace map
$W_{K/k}(A_K)\rightarrow A$ is simple over $k$ if and only if $G$ has prime order.
\end{corollary}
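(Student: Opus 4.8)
The plan is to reduce the statement to a purely representation-theoretic fact about the rational augmentation representation, and then to settle that fact group-theoretically. Since $\End(A_{\bar{k}})=\Z$, every order contains and is contained in $\Z$, so $\End(A)=\End(A_K)=\Z$ for all $K$; thus Assumption \ref{ass} holds with $S=\Z$, $D=\Q$, $R=\Z$ and $F=\Q$. As recorded just above the corollary, the kernel of the trace map $W_{K/k}(A_K)\to A$ is isogenous to $I\otimes_\Z A$, and isogenous abelian varieties are simultaneously simple or not. Applying Proposition \ref{prop:main} with the field $D=\Q$, the kernel is simple if and only if $I\otimes_\Z\Q$ is a simple $\Q[G]$-module. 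Everything therefore reduces to the claim that $I\otimes_\Z\Q$ is a simple $\Q[G]$-module if and only if $|G|$ is prime.

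The forward implication of this claim is exactly what was used earlier in the paper: when $|G|=p$ is prime, $G$ is cyclic of order $p$ and $I\otimes_\Z\Q\cong\Q(\mu_p)$ is a field, hence simple. For the converse I would argue contrapositively, using that $\Q[G]$ is semisimple and splits as $\Q[G]\cong\Q\oplus(I\otimes_\Z\Q)$, the $\Q$-factor being the trivial representation. Hence $I\otimes_\Z\Q$ is simple precisely when $\Q[G]$ has exactly two isomorphism classes of simple modules. By the classical description of rational representations, the number of such classes equals the number of conjugacy classes of cyclic subgroups of $G$, so simplicity of $I\otimes_\Z\Q$ is equivalent to $G$ possessing a unique conjugacy class of non-trivial cyclic subgroups. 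From here the group theory is short: all non-trivial cyclic subgroups being conjugate forces them to share a common order, which must be prime (otherwise a proper non-trivial subgroup of one of them would be a cyclic subgroup of a different, smaller order); thus $G$ has exponent $p$ and is a $p$-group, and since a non-trivial $p$-group contains a central---hence self-conjugate---subgroup of order $p$, there is a unique subgroup of order $p$, and a $p$-group of exponent $p$ with this property is cyclic of order $p$.

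I expect the converse to be the only real obstacle, and within it the delicate point is avoiding an unnecessary case analysis. A more self-contained route, if one prefers not to invoke the count of rational irreducibles, is as follows: if $G$ has a proper non-trivial normal subgroup $N$, then the surjection $\Q[G]\to\Q[G/N]$ carries $I\otimes_\Z\Q$ onto the augmentation module of $G/N$, which has dimension $|G/N|-1$, strictly between $0$ and $|G|-1$, so by semisimplicity $I\otimes_\Z\Q$ acquires a proper non-zero submodule and is reducible. This settles every composite $|G|$ except the non-abelian simple groups, for which one instead observes that their non-trivial irreducible complex characters take at least two distinct degrees and so cannot lie in a single $\Gal(\bar{\Q}/\Q)$-orbit, again forcing reducibility. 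The cyclic-subgroup count packages both cases uniformly, so I would present it as the main argument and regard establishing that count (or at least its non-abelian-simple special case) as the crux of the proof.
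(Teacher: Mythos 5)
Your proof is correct and follows the same route as the paper: reduce via Proposition \ref{prop:main} (and the isogeny of the trace kernel with $I\otimes_\Z A$) to the simplicity of $I\otimes_\Z\Q$ as a $\Q[G]$-module, and then invoke the fact that the groups with exactly two rational irreducible representations --- equivalently, two conjugacy classes of cyclic subgroups --- are precisely the cyclic groups of prime order. The paper simply asserts this last fact in one sentence, whereas you supply the group-theoretic verification (and a workable alternative via quotients and character degrees); the substance is the same.
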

\begin{proof}
Cyclic groups of prime order are precisely the finite groups with only two
rational irreducible representations, i.e. those for which $I\otimes_\Z \Q$ is
a simple $\Q[G]$-module.
\end{proof}

If $K/k$ is Galois with dihedral Galois group $G$ of order $D_{2p}$, $p$ an
odd prime, then there is a unique intermediate quadratic extension
$k'=k(\sqrt{d})/k$, and for any abelian variety $A/k$,
$W_{K/k}(A_K)\sim A\times A_d\times X^2$, where $A_d$ is the quadratic
twist of $A$ by $k'/k$. The remaining factor $X$ (up to isogeny) is the
twist of $A$ by a lattice in the $(p-1)$-dimensional irreducible rational
representation $\rho$ of $G$, which is the sum of all the two-dimensional
complex representations of $G$.

\begin{corollary}
Let $E/k$ be an elliptic curve over a number field, $K/k, X$ as above.
Then $X$ is simple.
\end{corollary}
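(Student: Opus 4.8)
The plan is to reduce the statement to a field-theoretic computation via Proposition \ref{prop:main}. Since $E$ is an elliptic curve, $D=\End(E)\otimes\Q$ is always a field, either $\Q$ (if $E$ has no complex multiplication) or an imaginary quadratic field. Assuming for the moment that Assumption \ref{ass} holds, Proposition \ref{prop:main} applies with $A=E$, $R=\Z$ and $\Gamma$ a $\Z[G]$-lattice in $\rho$, and it reduces the claim to showing that $\rho\otimes_\Q D$ is a simple $D[G]$-module. As $D[G]$ is semisimple in characteristic $0$, this is equivalent to
$$
\End_{D[G]}(\rho\otimes_\Q D)\;\cong\;\End_{\Q[G]}(\rho)\otimes_\Q D
$$
being a division ring.

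The next step is to identify $\End_{\Q[G]}(\rho)$. The two-dimensional complex irreducible representations of $G=D_{2p}$ all have Schur index $1$ and character field equal to the maximal real subfield $\Q(\mu_p)^+$ of $\Q(\mu_p)$, and they form a single orbit under $\Gal(\Q(\mu_p)^+/\Q)$; since $\rho\otimes\C$ is precisely their sum, $\rho$ is irreducible over $\Q$ with $\End_{\Q[G]}(\rho)\cong\Q(\mu_p)^+$. The remaining point is then purely field-theoretic: $\Q(\mu_p)^+$ is totally real while $D$ is $\Q$ or imaginary quadratic, so $\Q(\mu_p)^+\cap D=\Q$ and hence $\Q(\mu_p)^+\otimes_\Q D$ is a field. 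Thus $\rho\otimes_\Q D$ is simple and $X$ is simple.

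The main obstacle is that, unlike in Theorem \ref{thm:1}, the degree $[K:k]=2p$ is even, so Assumption \ref{ass} is no longer automatic: $E$ may acquire complex multiplication precisely over the unique quadratic subextension $k'=k(\sqrt d)$ of $K$, so that $\End(E)=\Z$ but $\End(E_K)$ is an order in an imaginary quadratic field $D$. In this case Proposition \ref{prop:main} does not apply verbatim, and I would instead compute $\End(X)\otimes\Q$ directly from Lemma \ref{lem:MRS} as
$$
\End(X)\otimes\Q\;\cong\;\big(\End_\Q(\rho)\otimes_\Q D\big)^G,
$$
where $G$ now acts on $D$ through $\Gal(k'/k)$ by complex conjugation. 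Writing $D=\Q\oplus\Q\sqrt{-m}$ as the sum of the trivial and the sign representation $\epsilon$ of $G/C_p$, and using that tensoring a two-dimensional representation of $D_{2p}$ by the sign character leaves it unchanged (so that $\rho\otimes\epsilon\cong\rho$), the invariants work out to $\Q(\mu_p)^+\oplus\Q(\mu_p)^+\!\cdot u$ with $u$ centralising $\Q(\mu_p)^+$ and $u^2\in\Q(\mu_p)^+$. The delicate point is to check that $u^2$ is totally negative, and in particular not a square in $\Q(\mu_p)^+$, so that the algebra is a quadratic CM extension of the totally real field $\Q(\mu_p)^+$ rather than splitting as $\Q(\mu_p)^+\times\Q(\mu_p)^+$; this negativity is inherited from the genuinely imaginary multiplication $\sqrt{-m}$. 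Granting it, $\End(X)\otimes\Q$ is again a field and $X$ is simple, and together the three possibilities — $E$ without complex multiplication over $K$, $E$ with complex multiplication already over $k$, and $E$ acquiring it exactly over $k'$ — exhaust all cases.
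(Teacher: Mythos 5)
Your first case---where $\End(E)=\End(E_K)$, so that Assumption \ref{ass} holds---is exactly the paper's argument: $\End_{\Q[G]}(\rho)\cong\Q(\mu_p)^+$ is totally real, hence meets the (at worst imaginary quadratic) field $D$ only in $\Q$, so $\rho\otimes_\Q D$ stays simple and Proposition \ref{prop:main} applies. You are also right, and more careful than the paper's two-line proof, to observe that since $[K:k]=2p$ is even, $E$ may acquire CM precisely over the quadratic subextension $k'$, in which case Assumption \ref{ass} fails and Proposition \ref{prop:main} does not apply; the paper's proof is silent on this case.

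However, your treatment of that exceptional case breaks down at exactly the point you defer. The intertwiner space $\mathrm{Hom}_{\Q[G]}(\rho,\rho\otimes\epsilon)$ lies inside the commutant $\Q[\rho(c)]\cong\Q(\mu_p)$ of $\rho(C_p)$ (here $c$ generates the rotation subgroup), on which conjugation by a reflection acts as complex conjugation; hence a nonzero intertwiner $\phi$ is a \emph{purely imaginary} element of $\Q(\mu_p)$, and $\phi^2=-N_{\Q(\mu_p)/\Q(\mu_p)^+}(\phi)$ is totally \emph{negative}. Consequently $u^2=-m\phi^2$ is totally \emph{positive}, the opposite of what you assert, and $\Q(\mu_p)^+[u]$ is a field only when $-m\phi^2$ is a nonsquare in $\Q(\mu_p)^+$. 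Writing $\sqrt{u^2}=\phi\sqrt{-m}$, one sees that $u^2$ is a square in $\Q(\mu_p)^+$ precisely when $\sqrt{-m}\in\Q(\mu_p)$, i.e.\ when $p\equiv 3\pmod 4$ and the CM field is $\Q(\sqrt{-p})$; in that sub-case $\End(X)\otimes\Q\cong\Q(\mu_p)^+\times\Q(\mu_p)^+$ and $X$ is \emph{not} simple. A concrete instance is $p=3$, $k=\Q$, $E:y^2=x^3+1$, $K$ the splitting field of $x^3-2$: over $k'=\Q(\mu_3)$ the twist becomes $I\otimes_\Z E_{k'}$, which splits by Theorem \ref{thm:1} since $\End_{k'}(E)\otimes\Q=\Q(\mu_3)$, and the two idempotents just constructed show the splitting is already rational over $\Q$. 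So the ``delicate point'' you grant is false, and no argument can close the gap: in this sub-case the conclusion itself fails, and the corollary needs the hypothesis $\End(E)=\End(E_K)$ that your first case (and, implicitly, the paper's own proof) uses.
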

\begin{proof}
The values of each irreducible two-dimensional character of $G$ generate the
maximal real subfield $\Q(\mu_p)^+$ of the $p$-th cyclotomic field,
and they are all Galois conjugate over $\Q$. They will therefore remain
conjugate over any imaginary quadratic field,
so the conclusion holds even when $E$ has CM.
\end{proof}

We conclude with an amusing example of a ``symplectic twist''.
Let $E/k$ be an elliptic curve over a number field,
let $K/k$ be Galois with Galois group $Q_8$, the quaternion group.
There are three intermediate quadratic fields, and correspondingly, the Weil
restriction $W_{K/k}(E_K)$ has, up to isogeny, four factors $E,E_1,E_2,E_3$
that are quadratic twists of $E$. Write
$W_{K/k}(E_K)\sim E\times E_1\times E_2\times E_3\times H$.
\begin{corollary}
Let $K/k$, $E/k$, $H$ be defined as above. Then $H$ is simple, unless
$E$ has CM by an imaginary quadratic field $\Q(\sqrt{-d})$ with
$d$ equal to the sum of three squares, in which case $H$ is isogenous to a
product of two isomorphic simple factors.
\end{corollary}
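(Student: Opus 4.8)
The plan is to realise $H$ as a twist of $E$ by the unique faithful rational irreducible representation $\rho$ of $G=Q_8$ and then apply Proposition \ref{prop:main}. Neither Proposition \ref{prop:quaternion} nor Proposition \ref{prop:gammafield} is available here: over a number field $\End(E)\otimes\Q$ is never a quaternion algebra, and $\End_{\Q[G]}(\rho)$ will turn out not to be a field, so the only applicable tool is Proposition \ref{prop:main}, used with $\Gamma$ a lattice in $\rho$.

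First I would record the rational representation theory of $Q_8$. Apart from the four one-dimensional representations, which account for the isogeny factors $E,E_1,E_2,E_3$, the group algebra $\Q[Q_8]$ has a single further Wedderburn component isomorphic to the Hamilton quaternion algebra $\Delta=\left(\frac{-1,-1}{\Q}\right)$, a division algebra ramified precisely at $2$ and $\infty$. The associated rational irreducible representation $\rho$ is four-dimensional, satisfies $\End_{\Q[G]}(\rho)\cong\Delta$, and has $\rho\otimes_\Q\C$ isomorphic to twice the two-dimensional complex irreducible representation. Up to isogeny $H$ is the twist of $E$ by a $G$-stable lattice in $\rho$, of dimension $\dim_\Q\rho\cdot\dim E=4$.

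Next I would split according to $\End(E_K)$. If $\End(E_K)=\Z$ --- in particular whenever $E$ has no complex multiplication, or has CM not acquired over $K$ --- then Assumption \ref{ass} holds with $S=\Z$, so $F=\Q$, and Proposition \ref{prop:main} gives that $H$ is simple if and only if $\rho$ is a simple $\Q[G]$-module; it is, so $H$ is simple. When $E$ has CM by an order $\cO$ in $\Q(\sqrt{-d})$ that is defined over $k$, I would take $R=S=\cO$, so that $F=R\otimes\Q=\Q(\sqrt{-d})=D$ and Assumption \ref{ass} again holds. Proposition \ref{prop:main} now reduces simplicity of $H$ to simplicity of the semisimple $F[G]$-module $\rho\otimes_\Q F$, which holds precisely when its endomorphism algebra $\End_{F[G]}(\rho\otimes_\Q F)\cong\Delta\otimes_\Q F$ is a division algebra, i.e. precisely when $F$ does not split $\Delta$.

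The arithmetic heart of the matter, and the step I expect to be the main obstacle, is to translate the condition that $F$ splits $\Delta$ into the three-squares condition. For the quadratic field $F$ this splitting is equivalent, by the general theory of quaternion algebras (see \cite[\S 74A]{CR}), to the existence of a $\Q$-embedding $F\hookrightarrow\Delta$. Since $\Delta$ is totally definite, a pure quaternion $bi+cj+ek$ squares to $-(b^2+c^2+e^2)$, so the imaginary quadratic subfields of $\Delta$ are exactly the $\Q(\sqrt{-n})$ with $n$ a sum of three rational squares; hence $F=\Q(\sqrt{-d})$ embeds if and only if $d$ is a sum of three rational squares. By the Gauss--Legendre three-square theorem together with the Davenport--Cassels lemma, for the positive integer $d$ this is equivalent to $d$ being a sum of three integer squares, which yields exactly the dichotomy in the statement: $H$ is simple unless $d$ is a sum of three squares. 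In the exceptional case $\Delta\otimes_\Q F\cong M_2(F)$, so $\rho\otimes_\Q F\cong V\oplus V$ for a two-dimensional simple $F[G]$-module $V$ with $\End_{F[G]}(V)=F$; correspondingly $H$ is isogenous to the square of the simple abelian surface obtained by twisting $E$ by a lattice in $V$, giving the two isomorphic factors of the statement. The one point demanding genuine care is Assumption \ref{ass} in the CM case: if the complex multiplication is acquired only over one of the quadratic subfields of $K/k$ rather than over $k$, then $\End(E)\neq\End(E_K)$, the propositions no longer apply directly, and one must instead analyse $\End(H)$ through the general Lemma \ref{lem:MRS}.
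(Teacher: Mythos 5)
Your proposal is correct and follows essentially the same route as the paper: identify $H$ (up to isogeny) as the twist of $E$ by a lattice in the four-dimensional rational irreducible representation of $Q_8$, whose endomorphism algebra is Hamilton's quaternions $\Delta$, and reduce simplicity to whether $\End(E)\otimes\Q$ splits $\Delta$, which for $\Q(\sqrt{-d})$ happens exactly when $d$ is a sum of three squares. You supply considerably more detail than the paper's two-sentence proof (notably the embedding/splitting equivalence, the Davenport--Cassels step from rational to integral sums of three squares, and the caveat about Assumption \ref{ass} when the CM is acquired only over a quadratic subfield of $K$), but the underlying argument is the same.
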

\begin{proof}
The factor $H$ is (up to isogeny) the twist of $E$ by two copies of the
standard representation of $Q_8$. The endomorphism algebra of this representation
is isomorphic to Hamilton's quaternions, which is split by precisely the
imaginary quadratic fields $\Q(\sqrt{-d})$ for which $d$ is the sum of three
squares.
\end{proof}

\end{document}